\documentclass[10pt]{amsart}
\usepackage{amsmath,amssymb,amsthm,mathtools,mathrsfs}
\usepackage{microtype}
\usepackage[hidelinks]{hyperref}
\usepackage[nameinlink,noabbrev]{cleveref}

\allowdisplaybreaks
\newtheorem{theorem}{Theorem}[section]
\newtheorem{corollary}[theorem]{Corollary}
\newtheorem{proposition}[theorem]{Proposition}
\newtheorem{lemma}[theorem]{Lemma}
\newtheorem{remark}[theorem]{Remark}
\theoremstyle{definition}

\newcommand{\R}{\mathbb R}
\newcommand{\e}{\mathrm e}
\newcommand{\dd}{\,\mathrm d}

\newcommand{\LtLp}[2]{L^{#1}(0,T;L^{#2}(D))}
\newcommand{\Gv}{\Gamma_{\!V}}
\newcommand{\Gh}{\Gamma_{\!H}}

\title[Pure-swirl loss of boundedness and mixed norms]
{Pure-swirl loss of boundedness under $L^1_tL^2_x$ forcing:
exact mixed-norm ranges}

\author{Hugo Beir\~ao da Veiga}
\address{Department of Mathematics, University of Pisa, Pisa, Italy}
\address{Academia das Ci\^encias de Lisboa, Lisbon, Portugal}
\email{hbeiraodaveiga@gmail.com}

\author{Jiaqi Yang}
\address{School of Mathematics and Statistics, Northwestern Polytechnical University,
Xi'an, China}
\email{yjqmath@nwpu.edu.cn; yjqmath@163.com}

\subjclass[2020]{35Q30, 76D03, 35A01}
\keywords{Navier--Stokes equations, external force, loss of boundedness,
pure swirl, mixed norms, Leray--Hopf solution}

\begin{document}

\begin{abstract}
We construct an explicit pure-swirl solution of the forced three-dimensional
Navier--Stokes equations in a circular cylinder.  For every $T>0$ and
$1\le p,q<\infty$ with $1/p+1/q>1$, the force belongs to
$L^q(0,T;L^p(D))$ and is smooth for $t<T$.  Moreover, the same construction
always yields the additional energy-class property
$f\in L^1(0,T;L^2(D))$.  The solution is classical on $[0,T)$, extends
strongly in $L^2(D)$ to the unique
Leray--Hopf solution at time $T$, and satisfies the energy equality, while
$\|v(t)\|_{L^\infty(D)}\to\infty$ as $t\uparrow T$.  We determine the
exact mixed-norm ranges of both the force and the velocity and obtain two-sided
rates for the force, the velocity supremum norm and the enstrophy.  The
construction refines Zhang's profile by an annular cancellation that preserves
smoothness at the symmetry axis.  It is a direct, self-contained sharpening of
the $k=1$ part of our previous weighted construction and supersedes its endpoint
discussion.  Since the convection term is absorbed by the pressure, the same
example applies to the forced Stokes system.
\end{abstract}

\maketitle

\section{Introduction and main results}

We study the forced incompressible Navier--Stokes
system
\begin{equation}
\label{eq:NS}
\left\{
\begin{aligned}
    \partial_t v-\Delta v+(v\cdot\nabla)v+\nabla P&=f
       &&\text{in }D\times(0,T),\\
    \nabla\cdot v&=0
       &&\text{in }D\times(0,T),
\end{aligned}
\right.
\end{equation}
where
\[
    D=B_2(0,1)\times(0,1)\subset\R^3
\]
is the unit circular cylinder. 
Write
\[
    \Gv=\partial B_2(0,1)\times(0,1),
    \qquad
    \Gh=B_2(0,1)\times\{0,1\}.
\]
On the vertical boundary we impose no slip, and on the horizontal boundary we
impose the free-slip (zero tangential stress) condition
\begin{equation}
\label{eq:boundary}
    v=0\quad\text{on }\Gv,
    \qquad
    v\cdot n=0,
    \quad
    \bigl((\nabla v+\nabla v^{\mathsf T})n\bigr)_{\tau}=0
    \quad\text{on }\Gh.
\end{equation}
For the pure-swirl fields constructed below, the condition on $\Gh$ reduces simply
to
\[
    v_3=0,
    \qquad
    \partial_{x_3}v_1=\partial_{x_3}v_2=0.
\]

Leray--Hopf solutions exist globally in the energy class
\cite{Leray1934,Hopf1951,Ladyzhenskaya1969,Sohr2001}; conditional regularity
and weak--strong uniqueness follow from the Serrin theory and its endpoint
refinements \cite{Serrin1963,EscauriazaSereginSverak2003,FarwigSohrVarnhorn2012}.
Forced weak solutions may nevertheless display nonuniqueness or loss of
boundedness; see \cite{ABC2022,GaldiGazzola2026}.  These phenomena are distinct
from the unforced smooth-data regularity problem.

A direct forced blow-up construction in a finite cylinder was recently given by
Zhang \cite{Zhang2025}.  His velocity is purely azimuthal and the system reduces
to a scalar parabolic equation; he obtains a force in $L^q_tL^1_x$ for every
finite $q>1$.  The self-similar scalar profile is inspired by Leray's classical
ansatz, although nontrivial unforced backward self-similar solutions are ruled out
under natural integrability assumptions by Ne\v{c}as, R\r{u}\v{z}i\v{c}ka and
\v{S}ver\'ak \cite{NecasRuzickaSverak1996}.  Thus the singular forcing is an
essential part of Zhang's mechanism.

\begin{remark}[The pure-swirl reduction]
\label{rem:old-remark}
We recall here, in the notation of the present paper, the observation made in
\cite[Remark~2.1]{BDVY2024}.  It is worth emphasizing that the assumption $v_r=v_3=0$ makes the nonlinear
term dynamically inessential for the azimuthal equation.  More precisely, for
an axisymmetric pure-swirl field $v=v_\theta(r,t)e_\theta$,
\[
    (v\cdot\nabla)v=-\frac{v_\theta^2}{r}e_r.
\]
Thus the convective term has no azimuthal component and its radial component is
absorbed exactly into the pressure.  Consequently, the construction and its
conclusions also apply to the Stokes evolution problem.  In particular, within
this ansatz, the present loss-of-boundedness mechanism cannot occur under a
bounded external force.  This observation does not make the construction less
significant: the argument isolates the effect of the forcing without any
nonlinear amplification.

Furthermore, the result is obtained under minimal kinematic assumptions: the
velocity is purely azimuthal and its scalar amplitude depends on a single
spatial variable.  This suggests that allowing an additional velocity component
or a genuine dependence on another spatial variable, even in a minimally
invasive way, may lead to richer phenomena.  Finally, both the force and the
solution are smooth on $D\times[0,T)$, and the loss of boundedness occurs only
as $t\uparrow T$.
\end{remark}

Concerning related results on axisymmetric Navier--Stokes flows, we recall the
lower blow-up-rate estimates of Chen, Strain, Tsai and Yau
\cite{ChenStrainTsaiYau2008,ChenStrainTsaiYau2009}, the regularity criteria of
Chen, Fang and Zhang \cite{ChenFangZhang2017}, the Liouville and singularity
analysis of Koch, Nadirashvili, Seregin and \v{S}ver\'ak
\cite{KochNadirashviliSereginSverak2009}, and the slightly supercritical
regularity result of Pan \cite{Pan2016}.  We also mention the work of Li, Pan,
Yang, Zeng, Zhang and Zhao \cite{LiPanYangZengZhangZhao2024} on axisymmetric
flows passing a cone; as a by-product, they constructed an unbounded
finite-energy solution of a forced Navier--Stokes problem in a special
cusp-type domain.

The present paper is a continuation and sharpening of \cite{BDVY2024}, rather
than an independent variant of Zhang's construction.  In \cite{BDVY2024} we
introduced the weighted ansatz $r^\alpha\phi$ and treated the more general
hierarchy $k\geq1$, obtaining loss of boundedness of derivatives of order
$k-1$.  The case $k=1$ was then extracted from that general argument.  The
sufficient mixed-norm condition stated there, however, did not retain the
$p$-th root when passing from an estimate for $\|f(t)\|_{L^p}^p$ to the time
norm.  Keeping this factor gives the correct condition
\[
    ((3-\alpha)p-2)q<2p,
\]
which includes $(p,q)=(2,1)$ and hence reaches $L^1_tL^2_x$.

The purpose of the present note is to isolate this $k=1$ case and give a direct,
reasonably self-contained proof.  Besides the corrected endpoint condition, the
new ingredients are the annular cancellation that guarantees smoothness at the
symmetry axis for noninteger $\alpha$, the two-sided temporal asymptotics, the
exact mixed-norm classifications of both $f$ and $v$, and the proof of a strong
$L^2$ terminal trace, the energy equality and uniqueness in the Leray--Hopf
class.  Thus the present paper supersedes the corresponding $k=1$ endpoint
discussion in \cite{BDVY2024}; the higher-order results for $k\geq2$ obtained
there are not reconsidered here.  To keep the argument self-contained, we
repeat the pure-swirl reduction and the profile calculations needed below.

We now state the main result; the mixed-boundary Leray--Hopf class is recalled below.

\begin{theorem}
\label{thm:main}
Let $T>0$ and let $1\leq p,q<\infty$ satisfy
\begin{equation}
\label{eq:pq-condition}
    \frac1p+\frac1q>1.
\end{equation}
Then there exist a force
\begin{equation}
\label{eq:force-spaces}
    f\in \LtLp{q}{p}
\end{equation}
and a classical solution $(v,P)$ of \eqref{eq:NS}--\eqref{eq:boundary} on
$D\times[0,T)$ with smooth initial datum such that
\begin{equation}
\label{eq:energy-class}
    v\in L^\infty(0,T;L^2(D))\cap L^2(0,T;H^1(D))
\end{equation}
and
\begin{equation}
\label{eq:Linfty-blowup}
    \lim_{t\uparrow T}\|v(t)\|_{L^\infty(D)}=\infty.
\end{equation}
In addition, the force constructed above always satisfies
\begin{equation}
\label{eq:energy-force-extra}
    f\in L^1(0,T;L^2(D)).
\end{equation}
Consequently, the solution extends to a function $v\in C([0,T];L^2(D))$.
This extension is the unique Leray--Hopf solution on $[0,T]$ with the same
force, initial datum and boundary conditions, and it satisfies the energy
equality on $[0,T]$.

More precisely, one may choose a parameter
\begin{equation}
\label{eq:alpha-intro}
    \max\left\{0,3-\frac2p-\frac2q\right\}<\alpha<1
\end{equation}
such that, as $t\uparrow T$,
\begin{align}
\label{eq:rates-intro}
    \|f(t)\|_{L^p(D)}
       &\asymp (T-t)^{-\frac{3-\alpha}{2}+\frac1p},\\
    \|v(t)\|_{L^\infty(D)}
       &\asymp (T-t)^{-\frac{1-\alpha}{2}},\\
    \|\nabla v(t)\|_{L^2(D)}^2
       &\asymp (T-t)^{\alpha-1}.
\end{align}
The velocity itself has the following exact mixed-norm classification.  For
$1\leq m,\sigma<\infty$,
\begin{equation}
\label{eq:velocity-mixed-intro}
    v\in L^\sigma(0,T;L^m(D))
    \quad\Longleftrightarrow\quad
    \alpha>1-\frac2m-\frac2\sigma.
\end{equation}
Moreover,
\begin{equation}
\label{eq:velocity-endpoints-intro}
\begin{aligned}
    v\in L^\infty(0,T;L^m(D))
       &\quad\Longleftrightarrow\quad \alpha>1-\frac2m,
       &&1\leq m<\infty,\\
    v\in L^\sigma(0,T;L^\infty(D))
       &\quad\Longleftrightarrow\quad \alpha>1-\frac2\sigma,
       &&1\leq\sigma<\infty.
\end{aligned}
\end{equation}
\end{theorem}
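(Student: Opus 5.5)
We work entirely inside the pure-swirl ansatz of \Cref{rem:old-remark}: $v=v_\theta(r,t)e_\theta$, $P=P(r,t)$ with $\partial_rP=v_\theta^2/r$, and $f=f_\theta(r,t)e_\theta$. Then the radial component of \eqref{eq:NS} is absorbed by the pressure, the axial component is trivial, and the azimuthal component reduces to the scalar equation
\[
\partial_t v_\theta-\Bigl(\partial_r^2+\tfrac1r\partial_r-\tfrac1{r^2}\Bigr)v_\theta=f_\theta .
\]
The free-slip condition on $\Gh$ holds automatically because nothing depends on $x_3$. We therefore \emph{prescribe} $v_\theta$ and \emph{define} $f_\theta$ by this equation. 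We take $v_\theta(r,t)=\chi(r)\,r^\alpha\phi(r,t)$ with a cutoff $\chi$ equal to $1$ near $r=0$ and vanishing near $r=1$, which gives no slip on $\Gv$. The profile is self-similar,
\[
\phi(r,t)=(T-t)^{-\frac{1-\alpha}{2}}\,\Phi\bigl(r/\sqrt{T-t}\bigr),
\]
so that $v_\theta=\chi\,(T-t)^{-\frac{1-\alpha}{2}}\,\Psi(\xi)$ with $\Psi(\xi)=\xi^\alpha\Phi(\xi)$ and $\xi=r/\sqrt{T-t}$. The profile $\Psi$ is chosen smooth and compactly supported in an annulus $\{a\le\xi\le b\}$, $0<a<b$, and nonnegative and nonzero.

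This annular choice is the "annular cancellation": since $\Psi$ vanishes near $\xi=0$, for each $t<T$ the field vanishes near the axis. Hence $v$ is smooth on $D\times[0,T)$ even when $\alpha$ is not an integer. The force is then smooth for $t<T$ and has the form
\[
f_\theta=(T-t)^{-\frac{3-\alpha}{2}}\,G(\xi)+(\text{cutoff terms}),
\]
where $G=\tfrac{1-\alpha}{2}\Psi+\tfrac{\xi}{2}\Psi'-\bigl(\Psi''+\xi^{-1}\Psi'-\xi^{-2}\Psi\bigr)$ is also supported in the annulus. Once $T-t$ is small, the annulus $\{r\sim\sqrt{T-t}\}$ lies inside $\{\chi=1\}$, so the cutoff terms vanish. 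On the other hand, $G\not\equiv0$, because otherwise $\Psi$ would be a compactly supported solution of a linear ODE and hence identically zero.

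All the rates follow by scaling with $r\dd r=(T-t)\,\xi\dd\xi$ (the $x_3$ integral contributes only a factor $1$):
\[
\|f(t)\|_{L^p}\asymp(T-t)^{-\frac{3-\alpha}{2}+\frac1p},\qquad \|v(t)\|_{L^m}\asymp(T-t)^{-\frac{1-\alpha}{2}+\frac1m},
\]
\[
\|v(t)\|_{L^\infty}\asymp(T-t)^{-\frac{1-\alpha}{2}},\qquad \|\nabla v(t)\|_{L^2}^2\asymp(T-t)^{\alpha-1}.
\]
Here the two-sided bounds come from the nonvanishing of $\Psi$, $G$ and $\nabla\Psi$ in the fixed annulus.

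Integrating in time, $f\in L^q_tL^p_x$ if and only if $q\bigl(\tfrac{3-\alpha}{2}-\tfrac1p\bigr)<1$, which is the same as $\alpha>3-\tfrac2p-\tfrac2q$. Condition \eqref{eq:pq-condition} makes the interval \eqref{eq:alpha-intro} nonempty. The choice $p=2,\ q=1$ requires only $\alpha>0$, which gives \eqref{eq:energy-force-extra}. The velocity classification \eqref{eq:velocity-mixed-intro} and its endpoint versions \eqref{eq:velocity-endpoints-intro} follow by the same computation, $\sigma\bigl(\tfrac{1-\alpha}{2}-\tfrac1m\bigr)<1$, together with the observation that a two-sided power rate makes the equivalence exact. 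The energy class \eqref{eq:energy-class} holds because $m=2$ gives $\alpha>0$, and because $\int_0^T(T-t)^{\alpha-1}\dd t<\infty$ for $\alpha>0$. The blow-up \eqref{eq:Linfty-blowup} holds since $\alpha<1$.

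For the terminal behaviour we first note that $\|v(t)\|_{L^2}\asymp(T-t)^{\alpha/2}\to0$. Hence $v(t)\to0$ strongly in $L^2$, and extending $v$ by $v(T)=0$ gives $v\in C([0,T];L^2)$. The energy equality on $[0,t]$, $t<T$, holds classically by multiplying the equation by $v$; the boundary terms vanish thanks to the mixed boundary conditions. We then pass to $t\uparrow T$ using $f\in L^1_tL^2_x$, $\nabla v\in L^2_{t,x}$ and the $L^2$ convergence.

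Uniqueness in the Leray–Hopf class is where I expect the main care to be needed. My plan is a weak–strong argument on $[0,T-\varepsilon]$ followed by a limit. On $[0,T-\varepsilon]$ the solution $v$ is bounded and smooth, so any Leray–Hopf solution $u$ with the same data coincides with $v$ there, by the classical Serrin-type uniqueness (or a direct energy estimate for $w=u-v$ using $v\in L^\infty$, adapted to the mixed boundary conditions, whose boundary terms also vanish). Thus $u=v$ on $[0,T)$. Weak continuity of $u$ in $L^2$ then gives $u(T)=v(T)$.
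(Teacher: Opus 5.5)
\textbf{Gap: the first endpoint equivalence fails for your construction.} Most of your construction works. With $v_\theta=\chi\,(T-t)^{-\frac{1-\alpha}{2}}\Psi(\xi)$ and $\Psi\in C_c^\infty((a,b))$, the following all go through: the force, $L^\infty$ and enstrophy rates, the mixed classification \eqref{eq:velocity-mixed-intro}, the $L^\sigma_tL^\infty_x$ endpoint, the energy equality, and weak--strong uniqueness on $[0,T-\varepsilon]$. The uniqueness step is the paper's argument. However, the construction does not give the first line of \eqref{eq:velocity-endpoints-intro}.

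Take $m=2/(1-\alpha)$, which lies in $(2,\infty)$ for every $\alpha\in(0,1)$. Your own scaling then gives, for $t$ near $T$,
$\|v(t)\|_{L^m}^m=2\pi\int_a^b|\Psi(\xi)|^m\xi\dd\xi$, a constant. So $v\in L^\infty(0,T;L^m(D))$ even though $\alpha=1-2/m$, which contradicts the ``$\Rightarrow$'' direction. Your remark that ``a two-sided power rate makes the equivalence exact'' holds for $\sigma<\infty$, because at equality $\int(T-t)^{-1}\dd t$ diverges. It fails for $\sigma=\infty$, where a zero exponent means boundedness. Your construction therefore yields $\alpha\ge1-2/m$, not $\alpha>1-2/m$. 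Since this borderline $m$ exists for every admissible $\alpha$, no choice of parameter rescues it.

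\textbf{Missing ingredient: a tail in the profile.} The paper's profile is not compactly supported in the self-similar variable. Since $\phi_0=-\beta/\rho$ for $\rho\ge b$, one has $W=\beta(r-r^{\alpha-1})$ for $r\ge bR(t)$, and the terminal profile $W_T=\beta(r-r^{\alpha-1})$ is nonzero. At $m(1-\alpha)=2$ this gives $\|v(t)\|_{L^m}^m\ge c\int_{bR(t)}^{r_0}r^{-1}\dd r\asymp|\log(T-t)|$, and this logarithmic growth is what makes the inequality strict.

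\textbf{How to repair it.} Require $\Psi(\xi)=-\beta\xi^{\alpha-1}$ for $\xi\ge b$. In physical variables this is the stationary tail $-\beta r^{\alpha-1}$, since $(T-t)^{-\frac{1-\alpha}2}\xi^{\alpha-1}=r^{\alpha-1}$. The condition at $r=1$ is then handled by your cutoff or by the paper's $\beta r$ correction. The modified construction changes three things:
\begin{itemize}
\item The force acquires the outer part $-\beta\alpha(2-\alpha)r^{\alpha-3}$. Its $L^p$ norm on $bR(t)\le r\le r_0$ has the same rate $(T-t)^{-\frac{3-\alpha}{2}+\frac1p}$, because $p(3-\alpha)>2$.
\item The upper bounds need the tail-integral estimates of \cref{lem:force-rate,lem:velocity-rates} rather than pure scaling.
\item Now $v(T)=W_Te_\theta\neq0$, so the strong $L^2$ trace must come from dominated convergence with $|W|\le C(r^{\alpha-1}+r)$, not from $\|v(t)\|_2\to0$.
\end{itemize}

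\textbf{Minor points.} Your formula for $\phi$ should read $\phi=(T-t)^{-1/2}\Phi(\xi)$ to be consistent with your $v_\theta$. Before invoking uniqueness in the Leray--Hopf class, you should also check that the extension satisfies \eqref{eq:weak-form} on all of $[0,T]$.
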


\begin{corollary}
\label{cor:curve}
For every finite $p\ge1$, one may take $q=1$; in particular the construction
reaches $f\in L^1(0,T;L^2(D))$.  Moreover, if $1\le p\le2$ and
\[
    q=\frac{4p}{7p-6},\qquad \frac2q+\frac3p=\frac72,
\]
then the conclusion of \cref{thm:main} holds, including $(p,q)=(2,1)$.
For any prescribed finite $m,\sigma$, one may also arrange
$v\in L^\sigma_tL^m_x$ while $\|v(t)\|_\infty\to\infty$.
\end{corollary}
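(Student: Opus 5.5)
The corollary follows directly from \cref{thm:main}, so the plan is to verify in each case that \eqref{eq:pq-condition} holds and that the interval \eqref{eq:alpha-intro} is nonempty. For the velocity claim, we choose $\alpha$ inside that interval and close enough to $1$.

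\emph{Case $q=1$.} For $q=1$ and any finite $p\ge1$ we have $1/p+1/q=1+1/p>1$, so \eqref{eq:pq-condition} holds. \Cref{thm:main} then gives a force $f\in L^1(0,T;L^p(D))$ together with all the stated conclusions. The statement $f\in L^1(0,T;L^2(D))$ is exactly \eqref{eq:energy-force-extra}, which holds for every admissible pair, and in particular for $p=2$.

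\emph{The curve $\frac2q+\frac3p=\frac72$.} Solving $\frac2q=\frac72-\frac3p=\frac{7p-6}{2p}$ gives $q=\frac{4p}{7p-6}$. For $1\le p\le2$ we have $7p-6\ge1>0$, so $q$ is positive and finite. We also need $q\ge1$, which is equivalent to $4p\ge 7p-6$, i.e.\ $p\le2$; this holds on our range.

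Next we check \eqref{eq:pq-condition}. We have
\[
\frac1p+\frac1q=\frac1p+\frac{7p-6}{4p}=\frac{7p-2}{4p}.
\]
This exceeds $1$ if and only if $7p-2>4p$, i.e.\ $p>2/3$, which is true for $p\ge1$.

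The endpoint $p=2$ gives $q=8/8=1$, so $(p,q)=(2,1)$ lies on the curve. \Cref{thm:main} therefore applies along the whole curve, including $(2,1)$. The only genuine bookkeeping here is confirming $q\in[1,\infty)$; that is the main point to check, and it is exactly where the restriction $p\le2$ enters.

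\emph{Prescribed velocity integrability.} Fix any admissible pair $(p,q)$, for instance $(2,1)$, and prescribed finite $m,\sigma\ge1$. By \eqref{eq:alpha-intro}, $\alpha$ may be any number in $(\max\{0,3-\frac2p-\frac2q\},1)$, and this interval is nonempty precisely because of \eqref{eq:pq-condition}. Since $1-\frac2m-\frac2\sigma<1$, we can choose $\alpha$ in the interval with
\[
\alpha>\max\Bigl\{0,\;3-\frac2p-\frac2q,\;1-\frac2m-\frac2\sigma\Bigr\}.
\]
By \eqref{eq:velocity-mixed-intro} this choice gives $v\in L^\sigma(0,T;L^m(D))$. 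Because $\alpha<1$, the rate $\|v(t)\|_{L^\infty(D)}\asymp(T-t)^{-(1-\alpha)/2}$ still diverges, in agreement with \eqref{eq:Linfty-blowup}.
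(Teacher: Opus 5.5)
Your proposal is correct and follows the paper's own argument. For the first two assertions the paper only notes that they follow from $1/p+1/q>1$, which you verify explicitly: $q\in[1,4]$ on the curve for $1\le p\le 2$, and $1/p+1/q=(7p-2)/(4p)>1$. For the last assertion the paper chooses $\max\{0,3-\frac2p-\frac2q,1-\frac2m-\frac2\sigma\}<\alpha<1$ and applies \eqref{eq:velocity-mixed-intro}, exactly as you do.
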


\begin{remark}
\label{rem:context}
The region $1/p+1/q>1$ lies
beyond the parabolic boundedness threshold $2/q+3/p=2$, distinct from the force
scaling line $2/q+3/p=3$.  The solution is classical on every $[0,T')$ and
loses boundedness only at $T$, where the force becomes unbounded.  Since the
pressure cancels convection, the example also solves Stokes and does not address
the Millennium Prize problem.
\end{remark}

\section{Leray--Hopf setting and pure-swirl reduction}
\label{sec:LH}
\label{sec:reduction}
Let $\mathscr V$ be the smooth divergence-free fields satisfying
$\varphi=0$ on $\Gv$ and $\varphi\cdot n=0$ on $\Gh$, and let $H$ and $V$
be its closures in $L^2(D)$ and $H^1(D)$.  A Leray--Hopf solution belongs to
\[
 L^\infty(0,T;H)\cap L^2(0,T;V)\cap C_{\rm w}([0,T];H)
\]
and satisfies the standard weak formulation and energy inequality; see
\cite{Fursikov1982,Galdi2000,Sohr2001}.  On the flat boundary $\Gh$ the
natural condition is $u\cdot n=0$ and $\partial_{x_3}u_\tau=0$.
For every admissible test field $\varphi$, the weak formulation is
\begin{align}
\label{eq:weak-form}
 &-\int_0^T\!\int_D v\cdot\partial_t\varphi
 +\int_0^T\!\int_D\nabla v:\nabla\varphi
 -\int_0^T\!\int_D(v\otimes v):\nabla\varphi\notag\\
 &\hspace{25mm}=\int_Dv_0\cdot\varphi(0)
 +\int_0^T\!\int_Df\cdot\varphi .
\end{align}

Use cylindrical coordinates $(r,\theta,x_3)$ and write
\[
    e_r=(\cos\theta,\sin\theta,0),
    \qquad
    e_\theta=(-\sin\theta,\cos\theta,0).
\]
Consider
\begin{equation}
\label{eq:pure-swirl}
    v(x,t)=W(r,t)e_\theta,
    \qquad
    f(x,t)=F(r,t)e_\theta.
\end{equation}
Then $\nabla\cdot v=0$ and
\[
    (v\cdot\nabla)v=-\frac{W^2}{r}e_r.
\]
If
\begin{equation}
\label{eq:pressure}
    P(r,t)=\int_0^r\frac{W(\ell,t)^2}{\ell}\,\dd\ell,
\end{equation}
then the radial equation is satisfied, whereas the azimuthal equation becomes
\begin{equation}
\label{eq:scalar-standard}
    \partial_tW-
    \left(\partial_r^2+\frac1r\partial_r-\frac1{r^2}\right)W=F.
\end{equation}
A field of the form \eqref{eq:pure-swirl} is independent of $x_3$, has zero normal
component on $\Gh$, and satisfies the free-slip condition there.  Thus only the
condition
\[
    W(1,t)=0
\]
remains to be imposed.

With \eqref{eq:pressure},
\[
    (v\cdot\nabla)v+\nabla P=0,
\]
so every example below also solves the corresponding forced Stokes system.

\section{Annular profile and construction}
\label{sec:profile}

Fix the prescribed $T>0$.  Choose
\begin{equation}
\label{eq:ab-choice}
    0<a<b<\min\left\{1,\frac1{\sqrt{2T}}\right\}
\end{equation}
and a nonnegative, nonzero function
\[
    A\in C_c^\infty((a,b)).
\]
Set
\begin{equation}
\label{eq:k-def}
    k(\rho)=-\e^{\rho^2/2}A'(\rho)
\end{equation}
and
\begin{equation}
\label{eq:phi0-def}
    \phi_0(\rho)
    =-\frac1\rho\int_0^\rho s\e^{s^2/2}A(s)\,\dd s,
    \qquad \rho>0,
\end{equation}
with $\phi_0(0)=0$.  Finally, define
\begin{equation}
\label{eq:beta-def}
    \beta=\int_0^\infty s\e^{s^2/2}A(s)\,\dd s>0.
\end{equation}
Since
\[
    A(\rho)=\int_\rho^\infty\e^{-\ell^2/2}k(\ell)\,\dd\ell,
    \qquad
    \int_0^\infty\e^{-\ell^2/2}k(\ell)\,\dd\ell=0,
\]
formula \eqref{eq:phi0-def} is the profile formula used in Zhang's construction.
The displayed cancellation, generated by the annular choice of $A$, forces the
profile to vanish near the symmetry axis and is the key to smoothness for
noninteger $\alpha$.

\begin{lemma}
\label{lem:profile}
The function $\phi_0$ belongs to $C^\infty([0,\infty))$ and satisfies
\begin{equation}
\label{eq:profile-ode}
    \phi_0''+\frac1\rho\phi_0'-\frac1{\rho^2}\phi_0
    -\phi_0-\rho\phi_0'=k(\rho).
\end{equation}
Moreover,
\begin{equation}
\label{eq:inner-outer-profile}
    \phi_0(\rho)=0\quad(0\leq\rho\leq a),
    \qquad
    \phi_0(\rho)=-\frac\beta\rho\quad(\rho\geq b),
\end{equation}
and
\begin{equation}
\label{eq:profile-bounds}
    |\phi_0(\rho)|\leq C\frac{\rho}{1+\rho^2},
    \qquad
    |\phi_0'(\rho)|\leq\frac{C}{1+\rho^2}.
\end{equation}
\end{lemma}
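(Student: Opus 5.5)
Since $A\in C_c^\infty((a,b))$ is nonnegative, the function $\Phi(\rho)=\int_0^\rho s\e^{s^2/2}A(s)\dd s$ is smooth on $[0,\infty)$. It vanishes identically on $[0,a]$ and equals the constant $\beta$ on $[b,\infty)$. Because $\phi_0=-\Phi/\rho$, we get $\phi_0\equiv0$ on $(0,a]$ (and $\phi_0(0)=0$ by definition) and $\phi_0=-\beta/\rho$ for $\rho\ge b$, which is \eqref{eq:inner-outer-profile}. Since $a>0$, the factor $1/\rho$ is smooth wherever $\Phi\ne0$. Near $\rho=0$ the function $\phi_0$ vanishes identically, so $\phi_0\in C^\infty([0,\infty))$ with all derivatives vanishing at the origin. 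This is exactly where the annular support of $A$ is used; with a general $k$, $\phi_0$ would only behave like $c\rho$ at the axis.

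To verify \eqref{eq:profile-ode}, I would work on $\rho>0$ with the identity $(\rho\phi_0)'=-\rho\e^{\rho^2/2}A$.
\begin{itemize}
\item Differentiating gives $\phi_0'+\phi_0/\rho=-\e^{\rho^2/2}A$.
\item Hence $\phi_0''+\frac1\rho\phi_0'-\frac1{\rho^2}\phi_0=\frac{d}{d\rho}\bigl(\phi_0'+\phi_0/\rho\bigr)=-\rho\e^{\rho^2/2}A-\e^{\rho^2/2}A'$.
\item The remaining terms give $-\phi_0-\rho\phi_0'=-(\rho\phi_0)'=\rho\e^{\rho^2/2}A$.
\end{itemize}
Summing, the $A$ terms cancel and what remains is $-\e^{\rho^2/2}A'=k$. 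At $\rho=0$ the equation holds trivially, since both sides vanish near the origin; this avoids any singular $1/\rho$ terms.

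For \eqref{eq:profile-bounds}, I would split into two ranges.
\begin{itemize}
\item On $[0,b]$: $\phi_0$ is smooth, and $\phi_0(0)=0$, so $|\phi_0|\le C\rho$ and $|\phi_0'|\le C$ there. Both are comparable to the claimed bounds, because $1+\rho^2\in[1,1+b^2]$ on this range.
\item On $[b,\infty)$: the explicit formula gives $|\phi_0|=\beta/\rho\le C\rho/(1+\rho^2)$ and $|\phi_0'|=\beta/\rho^2\le C/(1+\rho^2)$, using $\rho\ge b>0$.
\end{itemize}

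No step is a genuine obstacle. The only delicate point is regularity at the axis, and it is resolved by the vanishing of $\phi_0$ on $[0,a]$. The ODE identity is a direct computation once it is organized through $(\rho\phi_0)'$, which I would present rather than expanding every term.
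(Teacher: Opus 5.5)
Your proof is correct and follows essentially the same route as the paper. Both arguments rest on the first-order identity $(\rho\phi_0)'=-\rho\e^{\rho^2/2}A$: the paper checks the ODE as $g''-(1/\rho+\rho)g'=\rho k$ for $g=\rho\phi_0$, and your differentiation of $\phi_0'+\phi_0/\rho$ is the same computation organized differently, while the inner/outer formulas, smoothness at the axis and the bounds are read off from the support of $A$ in $(a,b)$ in both.
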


\begin{proof}
Let $g(\rho)=\rho\phi_0(\rho)$.  From \eqref{eq:phi0-def},
\[
    g'(\rho)=-\rho\e^{\rho^2/2}A(\rho).
\]
Using \eqref{eq:k-def}, direct differentiation gives
\[
    g''-\left(\frac1\rho+\rho\right)g'=\rho k(\rho).
\]
Substituting $g=\rho\phi_0$ and dividing by $\rho$ gives
\eqref{eq:profile-ode}.  Since $A$ is supported in $(a,b)$, the integral in
\eqref{eq:phi0-def} vanishes for $\rho\leq a$ and equals $\beta$ for
$\rho\geq b$.  This proves \eqref{eq:inner-outer-profile} and smoothness across
the joining points.  The bounds follow from the exact inner and outer formulas and
smoothness on $[a,b]$.
\end{proof}

For $0\leq t<T$, write
\[
    R(t)=\sqrt{2(T-t)}
\]
and define
\begin{equation}
\label{eq:phi-h-def}
    \phi(r,t)=\frac1{R(t)}\phi_0\left(\frac r{R(t)}\right),
    \qquad
    h(r,t)=\frac1{R(t)^3}k\left(\frac r{R(t)}\right).
\end{equation}
A direct computation from \eqref{eq:profile-ode} gives
\begin{equation}
\label{eq:phi-equation}
    \left(\partial_r^2+\frac1r\partial_r-\frac1{r^2}\right)\phi
    -\partial_t\phi=h.
\end{equation}
The profile bounds imply
\begin{equation}
\label{eq:phi-pointwise}
    |\phi(r,t)|\leq C\frac r{r^2+T-t},
    \qquad
    |\partial_r\phi(r,t)|\leq\frac C{r^2+T-t}.
\end{equation}
Furthermore,
\begin{equation}
\label{eq:inner-zero}
    \phi(r,t)=h(r,t)=0
    \quad\text{for }0\leq r\leq aR(t),
\end{equation}
and
\begin{equation}
\label{eq:outer-phi}
    \phi(r,t)=-\frac\beta r,
    \quad
    \partial_r\phi(r,t)=\frac\beta{r^2}
    \quad\text{for }r\geq bR(t).
\end{equation}

\section{Construction and estimates}
\label{sec:construction}

Fix $0<\alpha<1$ and set
\begin{equation}
\label{eq:W-def}
    W(r,t)=r^\alpha\phi(r,t)+\beta r.
\end{equation}
Define $v=We_\theta$, define $P$ by \eqref{eq:pressure}, and set
\begin{equation}
\label{eq:F-def}
    F=-r^\alpha h
      -\alpha^2r^{\alpha-2}\phi
      -2\alpha r^{\alpha-1}\partial_r\phi,
    \qquad
    f=Fe_\theta.
\end{equation}
Indeed, the product rule and \eqref{eq:phi-equation} yield
\[
    \partial_tW-
    \left(\partial_r^2+\frac1r\partial_r-\frac1{r^2}\right)W=F,
\]
so \cref{sec:reduction} gives a solution of \eqref{eq:NS}.

For each $t<T$, \eqref{eq:inner-zero} gives $W=\beta r$ and $f=0$ near the
axis, so the noninteger factor $r^\alpha$ causes no loss of smoothness.
Moreover $x_3$-independence gives the condition on $\Gh$, while
$1/R(t)>b$ and \eqref{eq:outer-phi} imply $W(1,t)=0$.  Thus $v,P,f$ are smooth
on $\overline D\times[0,T)$ and satisfy \eqref{eq:boundary}.

\begin{lemma}[Exact force rate]
\label{lem:force-rate}
For every $1\leq m<\infty$ there exist constants $c_m,C_m>0$ and
$t_m<T$ such that
\begin{equation}
\label{eq:force-rate-m}
    c_m(T-t)^{-\frac{3-\alpha}{2}+\frac1m}
    \leq \|f(t)\|_{L^m(D)}
    \leq C_m(T-t)^{-\frac{3-\alpha}{2}+\frac1m}
\end{equation}
for $t_m<t<T$.  Consequently, for $1\leq \sigma,m<\infty$,
\begin{equation}
\label{eq:force-iff}
    f\in L^\sigma(0,T;L^m(D))
    \quad\Longleftrightarrow\quad
    \alpha>3-\frac2m-\frac2\sigma.
\end{equation}
In particular, $f\in L^1(0,T;L^2(D))$ for every $\alpha>0$.
\end{lemma}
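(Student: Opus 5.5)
The plan is to rewrite $F$ in self-similar variables, so that $\|f(t)\|_{L^m(D)}$ factors as an explicit power of $R(t)$ times a truncated integral of a fixed profile, and then to check that this integral converges as the truncation radius $1/R(t)$ grows. The time-integrability statement \eqref{eq:force-iff} then reduces to a one-dimensional power test.

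\emph{Step 1: self-similar form of $F$.} Set $\rho=r/R(t)$ and substitute \eqref{eq:phi-h-def} into \eqref{eq:F-def}. The three terms scale as follows:
\begin{itemize}
\item $r^\alpha h=R^{\alpha-3}\rho^\alpha k(\rho)$;
\item $r^{\alpha-2}\phi=R^{\alpha-3}\rho^{\alpha-2}\phi_0(\rho)$;
\item $r^{\alpha-1}\partial_r\phi=R^{\alpha-3}\rho^{\alpha-1}\phi_0'(\rho)$.
\end{itemize}
Hence $F(r,t)=R(t)^{\alpha-3}G(r/R(t))$ with
\[
 G(\rho)=-\rho^\alpha k(\rho)-\alpha^2\rho^{\alpha-2}\phi_0(\rho)-2\alpha\rho^{\alpha-1}\phi_0'(\rho).
\]
By \cref{lem:profile}, $G$ vanishes on $[0,a]$ and is smooth there. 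For $\rho\ge b$ we have $k=0$, $\phi_0=-\beta/\rho$ and $\phi_0'=\beta/\rho^2$, so
\[
 G(\rho)=\alpha\beta(\alpha-2)\rho^{\alpha-3}\qquad(\rho\ge b).
\]
This tail is exactly a nonzero power, because $\alpha\in(0,1)$ and $\beta>0$.

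\emph{Step 2: the $L^m$ norm.} Since $D$ has unit height and $1/R(t)>b$,
\[
 \|f(t)\|_{L^m(D)}^m=2\pi\int_0^1|F|^m r\dd r
 =2\pi R^{(\alpha-3)m+2}\int_0^{1/R}|G(\rho)|^m\rho\dd\rho .
\]
For the upper bound, the integrand equals $c\rho^{(\alpha-3)m+1}$ for $\rho\ge b$. Because $\alpha<1$ and $m\ge1$, we have $(3-\alpha)m>2$, so $\int_0^\infty|G|^m\rho\dd\rho<\infty$. For the lower bound, the integral is at least $\int_0^{1/R(t_m)}|G|^m\rho\dd\rho>0$, since $G\not\equiv0$ on $[b,\infty)$. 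Taking $m$-th roots and using $R=\sqrt{2(T-t)}$ gives \eqref{eq:force-rate-m}; in fact it holds for all $t<T$ close to $T$.

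\emph{Step 3: mixed norms.} On $[0,t_m]$, $f$ is smooth on $\overline D\times[0,t_m]$, so $\|f(t)\|_{L^m}$ is bounded there. Near $T$, the two-sided bound shows that $\int^T\|f\|_{L^m}^\sigma\dd t<\infty$ if and only if
\[
 \sigma\Bigl(\frac{3-\alpha}2-\frac1m\Bigr)<1,
\]
that is, $\alpha>3-2/m-2/\sigma$; the case of a nonpositive exponent is trivially included. For $m=2$, $\sigma=1$ the threshold is $0$, so $f\in L^1(0,T;L^2(D))$ for every $\alpha>0$.

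The only delicate point is Step 2: one must ensure that the truncated profile integral neither degenerates nor diverges. The exact tail formula from \cref{lem:profile} settles both, via nonvanishing of $\alpha(\alpha-2)\beta$ and the exponent condition $(3-\alpha)m>2$.
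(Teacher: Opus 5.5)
Your proof is correct and is essentially the paper's argument. You use the same rescaling $r=R(t)\rho$, the same exact tail identity $F=-\alpha(2-\alpha)\beta r^{\alpha-3}$ on $r\ge bR(t)$ for the lower bound, and the same exponent condition $(3-\alpha)m>2$ for convergence at large $\rho$.

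The only difference is bookkeeping. You work with the exact factorization $F=R^{\alpha-3}G(r/R)$, whereas the paper splits $F$ into the $r^\alpha h$ term and a remainder controlled by the pointwise bounds \eqref{eq:phi-pointwise}. Your version gives two small extras: the two-sided bound holds on all of $[0,T)$ because $1/R(0)>b$, and $(T-t)^{\frac{(3-\alpha)m-2}{2}}\|f(t)\|_{L^m(D)}^m$ converges to an explicit multiple of $\int_0^\infty|G|^m\rho\,\dd\rho$.
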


\begin{proof}
Set $s=T-t$ and
\[
    B_m=\frac{(3-\alpha)m-2}{2}>0.
\]
Because $k$ is supported in $(a,b)$, cylindrical coordinates give
\begin{align}
\label{eq:h-upper}
    \int_D|r^\alpha h|^m\,\dd x
    &\leq C R(t)^{-3m}
       \int_0^{bR(t)}r^{\alpha m+1}\,\dd r
      \leq Cs^{-B_m}.
\end{align}
By \eqref{eq:phi-pointwise},
\[
    \left|
      \alpha^2r^{\alpha-2}\phi
      +2\alpha r^{\alpha-1}\partial_r\phi
    \right|
    \leq C\frac{r^{\alpha-1}}{r^2+s}.
\]
The left-hand side vanishes for $r\leq aR(t)$.  Hence, after
$r=\sqrt{s}\,y$,
\begin{align}
\label{eq:remainder-upper}
\int_D
       \left|
         \alpha^2r^{\alpha-2}\phi
         +2\alpha r^{\alpha-1}\partial_r\phi
       \right|^m\,\dd x\leq
       Cs^{-B_m}
       \int_{a\sqrt2}^{\infty}
       \frac{y^{1+\alpha m-m}}{(1+y^2)^m}\,\dd y
       \leq Cs^{-B_m}.
\end{align}
The final integral is finite because $m(3-\alpha)>2$, which follows from
$m\geq1$ and $\alpha<1$.  This proves the upper bound in
\eqref{eq:force-rate-m}.

For the lower bound, fix $r_0\in(0,1)$.  When $bR(t)\leq r\leq r_0$,
\eqref{eq:outer-phi} and \eqref{eq:F-def} give the exact identity
\begin{equation}
\label{eq:F-outer}
    F(r,t)=-\beta\alpha(2-\alpha)r^{\alpha-3}.
\end{equation}
Therefore
\[
    \|f(t)\|_{L^m(D)}^m
    \geq c\int_{bR(t)}^{r_0}r^{1-m(3-\alpha)}\,\dd r
    \geq cs^{-B_m}
\]
for $t$ sufficiently close to $T$.  This proves \eqref{eq:force-rate-m}.
The equivalence \eqref{eq:force-iff} follows by integrating
$s^{-\sigma((3-\alpha)/2-1/m)}$ near $s=0$.  At equality the integral diverges
logarithmically.  Taking $(\sigma,m)=(1,2)$ gives the last assertion.
\end{proof}

\begin{lemma}[Velocity and energy rates]
\label{lem:velocity-rates}
There exist $c,C>0$ and $t_0<T$ such that, for $t_0<t<T$,
\begin{equation}
\label{eq:velocity-rate}
    c(T-t)^{-\frac{1-\alpha}{2}}
    \leq\|v(t)\|_{L^\infty(D)}
    \leq C(T-t)^{-\frac{1-\alpha}{2}}
\end{equation}
and
\begin{equation}
\label{eq:gradient-rate}
    c(T-t)^{\alpha-1}
    \leq\|\nabla v(t)\|_{L^2(D)}^2
    \leq C(T-t)^{\alpha-1}.
\end{equation}
Moreover,
\begin{equation}
\label{eq:L2-uniform}
    \sup_{0\leq t<T}\|v(t)\|_{L^2(D)}<\infty.
\end{equation}
\end{lemma}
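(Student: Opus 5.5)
The plan is to work directly from the formula $W=r^\alpha\phi+\beta r$, with $s=T-t$ and $R=\sqrt{2s}$, splitting $D$ into three radial zones: the inner zone $r\le aR$, where $W=\beta r$; the transition zone $aR\le r\le bR$; and the outer zone $bR\le r\le1$, where \eqref{eq:outer-phi} gives the exact formula
\[
W=\beta\bigl(r-r^{\alpha-1}\bigr).
\]

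\textbf{$L^\infty$ rate.} For the upper bound I use \eqref{eq:phi-pointwise}:
\[
|r^\alpha\phi|\le C\frac{r^{1+\alpha}}{r^2+s}.
\]
Writing $r=\sqrt s\,y$, this equals $s^{(\alpha-1)/2}\,y^{1+\alpha}/(1+y^2)$, which is bounded in $y$ because $\alpha<1$. Adding $\beta r\le\beta$ gives $\|W(t)\|_\infty\le Cs^{-(1-\alpha)/2}$ for $s$ small. For the lower bound I evaluate $W$ at $r=bR$, where
\[
|W|\ge \beta (bR)^{\alpha-1}-\beta\ge c\,s^{-(1-\alpha)/2}
\]
once $t$ is close to $T$. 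Since $|v|=|W|$, this gives \eqref{eq:velocity-rate}.

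\textbf{Enstrophy rate.} For the pure-swirl field $v=We_\theta$ one has
\[
|\nabla v|^2=|\partial_rW|^2+W^2/r^2 .
\]
Both terms are bounded by $C\bigl(r^{2\alpha}(r^2+s)^{-2}+1\bigr)$, using
\[
\partial_rW=\alpha r^{\alpha-1}\phi+r^\alpha\partial_r\phi+\beta
\]
together with \eqref{eq:phi-pointwise}. Integrating $r\,\mathrm dr$ over $(0,1)$ and substituting $r=\sqrt s\,y$, the singular part becomes
\[
s^{\alpha-1}\int_0^\infty\frac{y^{2\alpha+1}}{(1+y^2)^2}\,\mathrm dy,
\]
and this integral converges because $2\alpha+1<3$. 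That gives the upper bound in \eqref{eq:gradient-rate}. For the lower bound I restrict to the outer zone $bR\le r\le r_0$, with $r_0<1$ fixed, where
\[
\frac{W}{r}=\beta\bigl(1-r^{\alpha-2}\bigr).
\]
Hence $|W/r|^2\ge c\,r^{2\alpha-4}$ on $r\le r_0$ small, and
\[
\int_{bR}^{r_0}r^{2\alpha-3}\,\mathrm dr\ge c\,s^{\alpha-1},
\]
since $2\alpha-3<-1$.

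\textbf{Uniform $L^2$ bound.} I would bound $|W|^2\le C\bigl(r^{2+2\alpha}(r^2+s)^{-2}+1\bigr)\le C\bigl(r^{2\alpha-2}+1\bigr)$. Then
\[
\int_0^1 r^{2\alpha-1}\,\mathrm dr<\infty
\]
because $\alpha>0$, and this bound is uniform in $t$, which proves \eqref{eq:L2-uniform}.

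The only delicate point is keeping the constants uniform for $t$ near $T$: the lower bounds need $s$ small enough that the $r^{\alpha-1}$ term dominates $\beta r$. The $t_0$ in the statement absorbs exactly this.
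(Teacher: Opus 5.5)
Your proposal is correct and follows the paper's proof essentially step by step: you rescale with $r=\sqrt{s}\,y$ for the upper bounds, and you use the exact outer formula $W=\beta(r-r^{\alpha-1})$ on $bR(t)\le r\le r_0$ for both lower bounds. The only differences are cosmetic: you evaluate $W$ at $r=bR(t)$ rather than at $\rho_0R(t)$ with $\rho_0>b$, and for \eqref{eq:L2-uniform} you use $(r^2+s)^{-2}\le r^{-4}$ to dominate $|W|$ by $C(r^{\alpha-1}+r)$ uniformly in $t$ (the same domination the paper invokes later for the terminal trace), instead of rescaling and splitting the $y$-integral at $1$.
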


\begin{proof}
From \eqref{eq:phi-pointwise},
\[
    |W(r,t)|\leq C\frac{r^{\alpha+1}}{r^2+s}+Cr,
    \qquad s=T-t.
\]
Writing $r=\sqrt{s}\,y$ shows that
\[
    \sup_{r\in[0,1]}\frac{r^{\alpha+1}}{r^2+s}
    \leq Cs^{(\alpha-1)/2},
\]
because $y^{\alpha+1}/(1+y^2)$ is bounded for $0<\alpha<1$.  This proves the
upper bound in \eqref{eq:velocity-rate}.  For any fixed $\rho_0>b$, put
$r(t)=\rho_0R(t)$.  For $t$ close to $T$, $r(t)<1$, and
\eqref{eq:outer-phi} yields
\[
    W(r(t),t)
    =-\beta\rho_0^{\alpha-1}R(t)^{\alpha-1}
      +\beta\rho_0R(t).
\]
The first term dominates, proving the lower bound.

For a pure swirl independent of $x_3$,
\begin{equation}
\label{eq:grad-swirl}
    |\nabla v|^2=|\partial_rW|^2+\frac{|W|^2}{r^2}.
\end{equation}
The pointwise bounds on $\phi$ and $\partial_r\phi$ give
\[
    |\partial_rW|+\frac{|W|}{r}
    \leq C\frac{r^\alpha}{r^2+s}+C.
\]
Consequently,
\begin{align}
\|\nabla v(t)\|_2^2\leq C\int_0^1\frac{r^{2\alpha+1}}{(r^2+s)^2}\,\dd r+C\leq Cs^{\alpha-1}
      \int_0^\infty\frac{y^{2\alpha+1}}{(1+y^2)^2}\,\dd y+C
     \leq Cs^{\alpha-1}.
\end{align}
For the reverse inequality, in the outer region one has
\[
    W(r,t)=\beta(r-r^{\alpha-1}).
\]
Choosing a fixed small $r_0>0$, the term $|W|/r$ is bounded below by
$cr^{\alpha-2}$ on $bR(t)\leq r\leq r_0$ for all $t$ close enough to $T$.
Thus
\[
    \|\nabla v(t)\|_2^2
    \geq c\int_{bR(t)}^{r_0}r^{2\alpha-3}\,\dd r
    \geq cs^{\alpha-1},
\]
which proves \eqref{eq:gradient-rate}.

Finally,
\begin{align}
    \|v(t)\|_2^2
    &\leq C\int_0^1\frac{r^{2\alpha+3}}{(r^2+s)^2}\,\dd r+C\\
    &=Cs^\alpha\int_0^{1/\sqrt{s}}
       \frac{y^{2\alpha+3}}{(1+y^2)^2}\,\dd y+C
      \leq C,
\end{align}
where the last estimate follows by splitting the $y$-integral at $1$.
This proves \eqref{eq:L2-uniform}.
\end{proof}

\begin{lemma}[Exact mixed norms of the velocity]
\label{lem:velocity-mixed}
For every $1\leq m<\infty$, as $t\uparrow T$,
\begin{equation}
\label{eq:velocity-Lm-rate}
\|v(t)\|_{L^m(D)}
\asymp
\begin{cases}
1,
   &m(1-\alpha)<2,\\[1mm]
|\log(T-t)|^{1/m},
   &m(1-\alpha)=2,\\[1mm]
(T-t)^{-\frac{1-\alpha}{2}+\frac1m},
   &m(1-\alpha)>2.
\end{cases}
\end{equation}
Consequently, for $1\leq m,\sigma<\infty$,
\begin{equation}
\label{eq:velocity-mixed-iff}
    v\in L^\sigma(0,T;L^m(D))
    \quad\Longleftrightarrow\quad
    \alpha>1-\frac2m-\frac2\sigma.
\end{equation}
Furthermore,
\begin{equation}
\label{eq:velocity-time-endpoint}
    v\in L^\infty(0,T;L^m(D))
    \quad\Longleftrightarrow\quad
    \alpha>1-\frac2m,
    \qquad 1\leq m<\infty,
\end{equation}
and, for $1\leq\sigma<\infty$,
\begin{equation}
\label{eq:velocity-space-endpoint}
    v\in L^\sigma(0,T;L^\infty(D))
    \quad\Longleftrightarrow\quad
    \alpha>1-\frac2\sigma.
\end{equation}
In particular, the terminal profile belongs to $L^m(D)$ precisely when
$\alpha>1-2/m$; in this case $v(t)\to v_T$ strongly in $L^m(D)$.
\end{lemma}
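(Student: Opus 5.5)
The plan is to compute $\|v(t)\|_{L^m}^m=2\pi\int_0^1|W(r,t)|^m r\,\dd r$ directly from the structure of $W=r^\alpha\phi+\beta r$, writing $s=T-t$ throughout. The term $\beta r$ contributes a bounded amount, so only $r^\alpha\phi$ matters.

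For the upper bound I would use \eqref{eq:phi-pointwise} to get $|r^\alpha\phi|\le Cr^{\alpha+1}/(r^2+s)$. Substituting $r=\sqrt s\,y$ then gives
\[
\int_0^1\frac{r^{m(\alpha+1)+1}}{(r^2+s)^m}\,\dd r
=s^{\frac{m(\alpha-1)}{2}+1}\int_0^{1/\sqrt s}\frac{y^{m(\alpha+1)+1}}{(1+y^2)^m}\,\dd y .
\]
The integrand behaves like $y^{m(\alpha-1)+1}$ at infinity and is integrable near $0$. Hence the $y$-integral is bounded, $\asymp|\log s|$, or $\asymp s^{(m(\alpha-1)+2)/2}$ according to whether $m(1-\alpha)>2$, $=2$, or $<2$. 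This yields the three upper bounds in \eqref{eq:velocity-Lm-rate}.

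For the lower bounds I would use the exact outer formula $W=\beta(r-r^{\alpha-1})$ from \eqref{eq:outer-phi} on $bR(t)\le r\le r_0$, with $r_0$ small and fixed. There $|W|\ge c r^{\alpha-1}$, so
\[
\|v\|_m^m\ge c\int_{bR(t)}^{r_0}r^{m(\alpha-1)+1}\,\dd r .
\]
This integral is $\gtrsim s^{(m(\alpha-1)+2)/2}$, $\gtrsim|\log s|$, or $\gtrsim$ a constant in the respective cases. The constant case needs nothing more, since $\|v(t)\|_m$ is also bounded below by this fixed positive integral over a region away from the axis. The main care point is keeping the constants uniform for $t$ near $T$.

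The mixed-norm statements then follow by integrating the rates near $s=0$.
\begin{itemize}
\item If $m(1-\alpha)\le2$, the norm is bounded or logarithmic, hence in every $L^\sigma$; this agrees with the criterion, because $\alpha\ge1-2/m>1-2/m-2/\sigma$.
\item If $m(1-\alpha)>2$, the function $s^{-\sigma((1-\alpha)/2-1/m)}$ is integrable iff $\sigma((1-\alpha)/2-1/m)<1$, which is exactly $\alpha>1-2/m-2/\sigma$. At equality the integral diverges logarithmically.
\end{itemize}
Continuity on compact subintervals $[0,T']$ follows from smoothness.

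The endpoint \eqref{eq:velocity-time-endpoint} is read off directly: boundedness in time holds iff $m(1-\alpha)<2$, since the log case is unbounded. The endpoint \eqref{eq:velocity-space-endpoint} follows from the two-sided rate \eqref{eq:velocity-rate} of \cref{lem:velocity-rates}, via integrability of $s^{-\sigma(1-\alpha)/2}$.

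For the terminal profile, note that for fixed $r>0$, once $bR(t)<r$, one has $W(r,t)=\beta(r-r^{\alpha-1})$ for all later $t$. So the pointwise limit is $v_T=\beta(r-r^{\alpha-1})e_\theta$. This lies in $L^m(D)$ iff $\int_0 r^{m(\alpha-1)+1}\,\dd r<\infty$, i.e.\ iff $\alpha>1-2/m$.

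Strong convergence in that case comes from dominated convergence. The function $\phi_0$ is monotone in no particular sense, but \eqref{eq:phi-pointwise} gives $|r^\alpha\phi|\le Cr^{\alpha+1}/(r^2+s)\le Cr^{\alpha-1}$, uniformly in $t$. Together with $\beta r$, this is an $L^m$ majorant exactly when $\alpha>1-2/m$. The dominated-convergence step, with this uniform majorant, is the only point needing a real check.
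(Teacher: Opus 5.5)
Your proposal is correct and follows the paper's proof essentially step for step. It uses the pointwise bound with $r=\sqrt{s}\,y$ for the upper estimates, and the exact outer formula $W=\beta(r-r^{\alpha-1})$ on $bR(t)\le r\le r_0$ for the lower estimates. The mixed-norm and endpoint claims come from integrating the rates in time, and the terminal profile from dominated convergence with the majorant $C(r^{\alpha-1}+r)$.

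One small slip: when $m(1-\alpha)<2$, the $y$-integral grows like $s^{-(m(\alpha-1)+2)/2}$, not $s^{+(m(\alpha-1)+2)/2}$. With the correct sign it cancels the prefactor and gives the $O(1)$ bound you conclude.
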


\begin{proof}
Put $s=T-t$.  The pointwise estimate used in the proof of
\cref{lem:velocity-rates} gives
\begin{equation}
\label{eq:velocity-Lm-upper-integral}
    \|v(t)\|_{L^m(D)}^m
    \leq C\int_0^1
       \frac{r^{m(\alpha+1)+1}}{(r^2+s)^m}\,\dd r+C.
\end{equation}
After the change of variables $r=\sqrt{s}\,y$, the integral on the right is
\[
    s^{1-\frac{m(1-\alpha)}2}
    \int_0^{1/\sqrt{s}}
       \frac{y^{m(\alpha+1)+1}}{(1+y^2)^m}\,\dd y.
\]
The large-$y$ behavior of the integrand is
$y^{1-m(1-\alpha)}$.  It follows that the right-hand side of
\eqref{eq:velocity-Lm-upper-integral} is bounded respectively by a constant,
by $C|\log s|$, or by $Cs^{1-m(1-\alpha)/2}$ according as
$m(1-\alpha)<2$, $m(1-\alpha)=2$, or $m(1-\alpha)>2$.

For the reverse estimates, choose $r_0\in(0,1)$ sufficiently small.  In the
outer region $bR(t)\leq r\leq r_0$ one has
\[
    W(r,t)=\beta(r-r^{\alpha-1}),
\]
and hence $|W(r,t)|\geq c r^{\alpha-1}$ for all $t$ sufficiently close to
$T$.  Therefore
\[
    \|v(t)\|_{L^m(D)}^m
    \geq c\int_{bR(t)}^{r_0}r^{1-m(1-\alpha)}\,\dd r.
\]
The three alternatives in \eqref{eq:velocity-Lm-rate} now follow by evaluating
this elementary integral.  Integrating the resulting power, or logarithm, in
time proves \eqref{eq:velocity-mixed-iff}.  The endpoint statements
\eqref{eq:velocity-time-endpoint} and \eqref{eq:velocity-space-endpoint} follow
from \eqref{eq:velocity-Lm-rate} and \eqref{eq:velocity-rate}, respectively.
Finally, $v_T(r)=\beta(r-r^{\alpha-1})e_\theta$ belongs to $L^m(D)$ exactly
when $m(1-\alpha)<2$.  In that case the domination
$|W(r,t)|\leq C(r^{\alpha-1}+r)$ yields strong $L^m$ convergence by dominated
convergence.
\end{proof}

The rate is realized on the scale $r\asymp R(t)$, whereas for fixed $r>0$
we have $W(r,t)\to W_T(r)=\beta(r-r^{\alpha-1})$; thus the terminal loss of
boundedness is concentrated on the symmetry axis.

\begin{proposition}[Final trace, energy equality and uniqueness]
\label{prop:extension}
The solution constructed above extends to $v\in C([0,T];L^2(D))$, belongs to
$L^2(0,T;V)$, satisfies \eqref{eq:weak-form} and the energy equality on
$[0,T]$, and is the unique Leray--Hopf solution with the same data.
\end{proposition}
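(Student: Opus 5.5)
The proof has four steps: the terminal trace, the energy equality, the weak formulation, and weak--strong uniqueness.

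\emph{Terminal trace.} Set $v(T)=v_T=\beta(r-r^{\alpha-1})e_\theta$. Since $\alpha>0$, we have $2(1-\alpha)<2$, so the last assertion of \cref{lem:velocity-mixed} with $m=2$ gives $v(t)\to v_T$ strongly in $L^2(D)$. On $[0,T)$ the field $v$ is smooth up to $\overline D$, so $v\in C([0,T];L^2(D))$.

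\emph{Membership in $V$ and the energy equality.} From \eqref{eq:gradient-rate}, $\int_0^T\|\nabla v\|_2^2\,dt\le C\int_0^T s^{\alpha-1}\,ds<\infty$. For each $t<T$, $v(t)$ is smooth, divergence free, vanishes on $\Gv$ and has zero normal component on $\Gh$. Its divergence-free approximants in $\mathscr V$ are obtained from $W$ by cutoff near $r=1$, which preserves the pure-swirl structure. Hence $v\in L^2(0,T;V)$. For $t<T$ we test the classical equation with $v$. Three facts make the extra terms vanish: $(v\cdot\nabla)v+\nabla P=0$, the boundary integrals vanish under \eqref{eq:boundary}, and the free-slip condition makes the Laplacian term equal to $-\|\nabla v\|^2$, because $\partial_{x_3}v=0$ and $v=0$ on $\Gv$. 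This gives
\[
\tfrac12\|v(t)\|_2^2+\int_0^t\|\nabla v\|_2^2=\tfrac12\|v_0\|_2^2+\int_0^t\!\int_D f\cdot v .
\]
To let $t\uparrow T$ we need $\int_0^T|\int_D f\cdot v|\,dt<\infty$. This follows from $f\in L^1_tL^2_x$ (\cref{lem:force-rate}) together with the uniform bound \eqref{eq:L2-uniform}. Combined with the strong $L^2$ convergence, the identity passes to $t=T$.

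\emph{Weak formulation.} The classical identity holds on $[0,T']$ with the boundary term $\int_D v(T')\cdot\varphi(T')$. We let $T'\uparrow T$ using $v\in L^2H^1$, $f\in L^1L^2$, and $v\otimes v\in L^1L^1$ (the latter from $v\in L^2L^6\cap L^\infty L^2$, paired with test fields having $\nabla\varphi$ bounded). The terminal term vanishes if $\varphi(T)=0$, or is controlled by the strong trace in general. This yields \eqref{eq:weak-form}. So $v$ is a Leray--Hopf solution, in fact with energy equality.

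\emph{Uniqueness.} This is the main obstacle, because $v\notin L^\infty$ near $T$ and the Serrin class fails at the endpoint. I would argue on $[0,T']$, $T'<T$: there $v$ is smooth and bounded, so standard weak--strong uniqueness, using the energy inequality of the competitor $u$ and the energy equality of $v$, gives $u=v$ on $[0,T']$. Since $T'<T$ is arbitrary, $u=v$ on $[0,T)$, and weak continuity of $u$ gives $u(T)=v_T$.

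The key check is that weak--strong uniqueness holds for the mixed boundary conditions with $f\in L^1L^2$. The estimate for $w=u-v$ is $\tfrac{d}{dt}\|w\|^2\le C\|v\|_\infty^2\|w\|^2$ in integrated form. The force cancels, since it is the same for $u$ and $v$. The trilinear cancellation $\int (u\cdot\nabla)w\cdot w=0$ needs $u\cdot n=0$ on $\partial D$, which holds in $V$. Gronwall then applies on $[0,T']$ because $v\in L^\infty(D\times(0,T'))$.

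Alternatively, one can exploit the cancellation specific to $v$. The term $\int (w\cdot\nabla)v\cdot w$ only involves $\nabla v$, and a Hardy inequality with $|\nabla v|\lesssim r^{\alpha-2}$ might give uniqueness up to $T$ directly. However, restricting to $T'<T$ already suffices.
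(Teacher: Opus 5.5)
Your proposal is correct and follows essentially the paper's route: the strong $L^2$ trace by dominated convergence, the energy equality on $[0,t]$ passed to $t=T$ via $f\in L^1(0,T;L^2(D))$ and \eqref{eq:L2-uniform}, the limit in the weak formulation, and weak--strong uniqueness on $[0,T']$ followed by weak continuity at $T$. Your Gronwall weight $\|v\|_\infty^2$ in place of the paper's $\|\nabla v\|_\infty$ makes no difference, but your side remark that the Serrin class fails at the endpoint is inaccurate: by \eqref{eq:velocity-space-endpoint} with $\sigma=2$ one has $v\in L^2(0,T;L^\infty(D))$, so $\int_0^T\|v\|_\infty^2\,\mathrm dt<\infty$ even up to $T$, although restricting to $T'<T$ already suffices.
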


\begin{proof}
For every fixed $r>0$, the outer formula applies for all $t$ sufficiently close to
$T$, so
\begin{equation}
\label{eq:terminal-profile}
    W(r,t)\longrightarrow W_T(r):=\beta(r-r^{\alpha-1}).
\end{equation}
Moreover, \eqref{eq:phi-pointwise} implies
\[
    |W(r,t)|\leq C(r^{\alpha-1}+r).
\]
The right-hand side belongs to $L^2(D)$ precisely because $\alpha>0$.
Dominated convergence gives
\begin{equation}
\label{eq:strong-trace}
    v(t)\longrightarrow v_T:=W_Te_\theta
    \quad\text{strongly in }L^2(D).
\end{equation}
Set $v(T)=v_T$.  Smoothness before $T$ and \eqref{eq:strong-trace} imply
$v\in C([0,T];L^2(D))$.  By \eqref{eq:gradient-rate} and $\alpha>0$,
$v\in L^2(0,T;V)$.

For every $t<T$, multiplying \eqref{eq:NS} by $v$ and integrating gives
\begin{equation}
\label{eq:energy-equality}
    \frac12\|v(t)\|_2^2
    +\int_0^t\|\nabla v(s)\|_2^2\,\dd s
    =\frac12\|v(0)\|_2^2
     +\int_0^t\!\int_Df\cdot v\,\dd x\dd s.
\end{equation}
The pressure and transport terms vanish by incompressibility and the normal
boundary conditions.  The viscous boundary term vanishes because $v=0$ on
$\Gv$, while $\partial_{x_3}v=0$ on $\Gh$.  Since
$f\in L^1(0,T;L^2(D))$, $v\in L^\infty(0,T;L^2(D))$, and
$\nabla v\in L^2(0,T;L^2(D))$, all terms in \eqref{eq:energy-equality} converge
as $t\uparrow T$.  Thus the equality also holds at $T$.

Passing to the limit in the classical weak formulation on $[0,t]$ as
$t\uparrow T$ gives \eqref{eq:weak-form}; the nonlinear term is integrable in
the standard energy class.  Hence the extension is a Leray--Hopf solution.

Let $u$ be another Leray--Hopf solution with the same data.  Fix $T'<T$ and set
$w=u-v$.  Since $v$ is smooth on $[0,T']$, the usual weak--strong difference
estimate (see, e.g., \cite{Serrin1963,KozonoSohr1996,FarwigSohrVarnhorn2012}),
justified here by approximation in the space $V$, gives
\begin{equation}
\label{eq:weak-strong}
    \frac12\|w(t)\|_2^2
    +\int_0^t\|\nabla w(s)\|_2^2\,\dd s
    \leq
    \int_0^t\|\nabla v(s)\|_{L^\infty(D)}\|w(s)\|_2^2\,\dd s
\end{equation}
for $0\leq t\leq T'$.  The boundary terms vanish exactly as above.  Gronwall's
lemma implies $u=v$ on $[0,T']$.  Since $T'<T$ is arbitrary, $u=v$ almost
everywhere on $[0,T)$.  Weak continuity in $H$, together with
\eqref{eq:strong-trace}, then also gives $u(T)=v(T)$.  This proves uniqueness.
\end{proof}

\begin{proof}[Proof of \cref{thm:main}]
Condition \eqref{eq:pq-condition} is equivalent to
\[
    3-\frac2p-\frac2q<1.
\]
Choose $\alpha$ as in \eqref{eq:alpha-intro} and perform the construction in
\cref{sec:profile,sec:construction}.  The preceding construction gives a classical solution with the required boundary conditions.  Applying
\cref{lem:force-rate} with $(\sigma,m)=(q,p)$ gives $f\in L^q_tL^p_x$, while applying
it with $(\sigma,m)=(1,2)$ gives $f\in L^1_tL^2_x$.  The rates and the blow-up follow
from \cref{lem:force-rate,lem:velocity-rates}, while
\cref{lem:velocity-mixed} gives \eqref{eq:velocity-mixed-intro}--
\eqref{eq:velocity-endpoints-intro}.  Finally,
\cref{prop:extension} gives the Leray--Hopf extension, the energy equality and
uniqueness.
\end{proof}

\begin{proof}[Proof of \cref{cor:curve}]
The first two assertions follow from $1/p+1/q>1$.  For the last choose
\[
 \max\left\{0,3-\frac2p-\frac2q,
 1-\frac2m-\frac2\sigma\right\}<\alpha<1
\]
and apply \eqref{eq:velocity-mixed-intro}.
\end{proof}

\section{Conclusion}
The construction gives the parallel exact conditions
\[
 f\in L^q_tL^p_x\iff\alpha>3-\frac2p-\frac2q,
 \qquad
 v\in L^\sigma_tL^m_x\iff\alpha>1-\frac2m-\frac2\sigma,
\]
with the endpoint modifications above.  Thus energy control, strong mixed-norm
integrability, uniqueness and the energy equality do not prevent terminal loss
of boundedness under an $L^1_tL^2_x$ force.

\section*{Acknowledgements}
Hugo Beir\~ao da Veiga is partially supported by FCT (Portugal) under project
UIDB/MAT/04561/2020.  Jiaqi Yang is supported by the National Natural Science
Foundation of China under Grant No.~12471225, and Natural Science Basic Research Program of Shaanxi (Program No. 2026JC-YXQN-02).

\section*{Data availability and conflict of interest}
No datasets were generated or analysed in this work.  The authors declare that they
have no conflict of interest.

\end{document}